\newtheorem*{corollary*}{Corollary}
\newtheorem*{theorem*}{Theorem}
\newtheorem{theorem}{Theorem}[section]
\newtheorem{corollary}[theorem]{Corollary}
\newtheorem{lemma}[theorem]{Lemma}
\newtheorem{proposition}[theorem]{Proposition}
\newtheorem*{claim*}{Claim}
\newtheorem*{conjecture}{Conjecture}
\theoremstyle{definition}
\theoremstyle{remark}
\numberwithin{equation}{theorem}
\renewcommand*\env@matrix[1][\
arraystretch]{%
  \edef\arraystretch{#1}%
  \hskip -\arraycolsep
  \let\@ifnextchar\new@ifnextchar
  \array{*\c@MaxMatrixCols c}}
\begin{document}

\title{On the injective dimension of the Jacobson radical}
\date{\today}

\subjclass[2010]{Primary 16G10, 16E10}

\keywords{global dimension, injective dimension, Jacobson radical}

\author{Ren\'{e} Marczinzik}
\address{Institute of algebra and number theory, University of Stuttgart, Pfaffenwaldring 57, 70569 Stuttgart, Germany}
\email{marczire@mathematik.uni-stuttgart.de}

\begin{abstract}
We conjecture that the injective dimension of the Jacobson radical equals the global dimension for Artin algebras. We provide a proof of this conjecture in case the Artin algebra has finite global dimension and in some other cases.
\end{abstract}

\maketitle
\section*{Introduction}
Recall that for a ring $R$ the global dimension $gldim(R)$ of $R$ is defined as the supremum of the projective dimensions of modules.
In \cite{A}, Auslander proved the fundamental result that for semi-primary rings the global dimension equals the maximum of the projective dimensions of the simple modules. For a modern proof of Auslander's result we refer to the book by Lam, see \cite{L} theorem 5.72., where one can also find applications of this result.
Let $J$ denote the Jacobson radical of a ring $R$. Then the result of Auslander can be equivalently stated as $gldim(R)=pd(J)+1$, where $pd(J)$ denotes the projective dimension of the Jacobson radical $J$. Thus in order to calculate the global dimension, it is enough to know the projective dimension of a single module, namely the Jacobson radical of the ring. Suprisingly it seems that no attention has been paid to the injective dimension of the Jacobson radical in the literature yet. In this article we suggest the following conjecture:
\begin{conjecture}
Let $A$ be an Artin algebra. Then the global dimension of $A$ equals the injective dimension of the Jacobson radical $J$ of $A$.
\end{conjecture}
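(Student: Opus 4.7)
My plan is to attack the conjecture first in the case $n := gldim(A) < \infty$ singled out in the abstract, since Auslander's theorem, as recalled in the introduction, gives tight control of the projective dimensions of simple modules in that situation. The upper bound $id(J) \leq n$ is immediate from the definition of global dimension, so the real task is to produce some module $M$ with $\Ext^n(M,J) \neq 0$, witnessing $id(J) \geq n$. (The degenerate case $n=0$, in which $A$ is semisimple and $J=0$, should be treated separately or excluded.)

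The strategy begins from Auslander's description: pick a simple $S$ with $pd(S) = n$, and hence a simple $S'$ with $\Ext^n(S,S') \neq 0$, and then try to transport this nonvanishing Ext from $S'$ to $J$. The mechanism will be a surjection $J \twoheadrightarrow S'$: if such a surjection exists with kernel $K$, then applying $\Hom(S,-)$ to $0 \to K \to J \to S' \to 0$ and using $\Ext^{n+1}(S,K) = 0$ (which holds because $gldim(A) = n$) produces a surjection $\Ext^n(S,J) \twoheadrightarrow \Ext^n(S,S')$, so $\Ext^n(S,J) \neq 0$ as desired.

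The crux is therefore to build the surjection $J \twoheadrightarrow S'$. Since $J/J^2 = \bigoplus_i \rad(P_i)/\rad^2(P_i)$ is semisimple, this reduces to showing that $S'$ is a composition factor of $J/J^2$. I will invoke the standard Gabriel-quiver description of the second radical layer: the composition factors of $J/J^2$ are exactly those simples $T$ for which $\Ext^1(U,T) \neq 0$ for some simple $U$, equivalently, the simples that are not simple-injective. Because $\Ext^n(S,S') \neq 0$ with $n \geq 1$ forces $id(S') \geq 1$, the simple $S'$ cannot be injective; hence it is a direct summand of the semisimple module $J/J^2$, and composing $J \twoheadrightarrow J/J^2$ with the projection onto this summand furnishes the needed surjection.

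The step I expect to be the main obstacle is extending this argument beyond the finite global dimension hypothesis. When $gldim(A) = \infty$, Auslander's theorem no longer delivers a simple of maximal projective dimension, and the whole transport-of-Ext mechanism has to be reimagined. This is presumably why the conjecture remains open in full generality and why the paper's remaining results are only for "some other cases" under additional assumptions. A plausible alternative route is to pass to the opposite algebra via the standard duality $id_A(J) = pd_{A^{\mathrm{op}}}(DJ) = pd_{A^{\mathrm{op}}}(DA/\soc(DA))$ and work there, but bounding this projective dimension from below without the benefit of Auslander's equality for simples appears genuinely delicate.
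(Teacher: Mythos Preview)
Your argument for the finite global dimension case is correct and follows essentially the same route as the paper's proof of Theorem~\ref{maintheorem}: both hinge on the observation that every non-injective simple occurs as a summand of $J/J^2$ (the paper's Proposition~\ref{mainpropo} and Lemma~\ref{simpleinjlemma}), and then use a short exact sequence together with the vanishing of $\Ext^{n+1}$ to push the injective-dimension witness from that simple up to $J$. The only cosmetic difference is packaging: you project $J$ onto a single simple $S'$ and run the long exact sequence for $\Ext(S,-)$ explicitly, whereas the paper keeps the whole quotient $J/J^2$ in the sequence $0\to J^2\to J\to J/J^2\to 0$ and invokes the inequality $id(J/J^2)\le\max(id(J),id(J^2)-1)$, which is precisely your long exact sequence argument stated as a lemma.
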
 
We prove this conjecture for several classes of algebras in this article. The main theorem proves it for algebras of finite global dimension:
\begin{theorem*}
Let $A$ be an Artin algebra of finite global dimension. Then the global dimension of $A$ equals the injective dimension of the Jacobson radical of $A$.
\end{theorem*}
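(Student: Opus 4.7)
The plan is to prove $id(J)=gldim(A)=n$ by exhibiting a simple module $S$ with $\Ext^n(S,J)\ne 0$; combined with the immediate upper bound $id(J)\le n$, this gives the result. The candidate is a simple $S$ with $pd(S)=n$, which exists by Auslander's theorem (the formula $gldim(A)=pd(J)+1$ recalled in the introduction, applied to the simple summand of $A/J$ achieving the maximum projective dimension).

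Suppose for contradiction $\Ext^n(S,J)=0$. Applying $\Ext^{\ast}(S,-)$ to the short exact sequence
\[
0 \longrightarrow J^2 \longrightarrow J \longrightarrow J/J^2 \longrightarrow 0
\]
I obtain the exact piece
\[
\Ext^n(S,J) \longrightarrow \Ext^n(S,J/J^2) \longrightarrow \Ext^{n+1}(S,J^2),
\]
whose outer terms vanish (the left by hypothesis, the right because $gldim(A)=n$). Hence $\Ext^n(S,J/J^2)=0$, and since $J/J^2$ is a semisimple module this means $\Ext^n(S,T)=0$ for every simple $T$ occurring as a composition factor of $J/J^2$.

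On the other hand, because $pd(S)=n$ the functor $\Ext^n(S,-)$ is nonzero, and because $\Ext^{n+1}(S,-)=0$ it is right exact; a short induction on composition length (which terminates for Artin algebras) shows it cannot vanish on every simple. So there is a simple $T'$ with $\Ext^n(S,T')\ne 0$, and by the previous step $T'$ does not occur as a composition factor of $J/J^2$.

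The decisive point is the identification of the simples absent from $J/J^2$ with the injective simples. Writing $J/J^2=\bigoplus_k \rad(P_k)/\rad^2(P_k)$, a simple $S_i$ occurs as a summand of $J/J^2$ iff $\Ext^1(S_k,S_i)\ne 0$ for some $k$, and a short argument with the injective envelope of $S_i$ (computing $\Ext^1(A/J,S_i)$ via $0\to S_i \to I(S_i) \to I(S_i)/S_i \to 0$) shows this is equivalent to $S_i$ not being injective. Consequently $T'$ is injective, so $id(T')=0$ and $\Ext^n(S,T')=0$ for $n\ge 1$—a contradiction. The main technical point, and the only step where genuine calculation is needed, is this identification of non-injective simples with the composition factors of $J/J^2$; once that is in hand, the rest is a direct chase of long exact sequences. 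The degenerate case $n=0$ (where $A$ is semisimple and $J=0$) is handled by convention.
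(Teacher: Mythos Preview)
Your argument is correct and is essentially the paper's proof: both hinge on the short exact sequence $0\to J^2\to J\to J/J^2\to 0$ together with the identification of the simple summands of $J/J^2$ with precisely the non-injective simples (the paper's Lemma~\ref{simpleinjlemma} and Proposition~\ref{mainpropo}). The only cosmetic difference is that the paper packages the long exact sequence as the inequality $id(J/J^2)\le\max(id(J),id(J^2)-1)$ and invokes a simple of injective dimension $g$ as a summand of $J/J^2$, whereas you fix a simple $S$ of projective dimension $n$ and chase $\Ext^n(S,-)$ explicitly to the same conclusion.
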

We further prove the conjecture for some other well studied classes of algebras such as local, selfinjective, Nakayama and Gorenstein algebras.

Corollary \ref{gorensteincorollary} is due to Dan Zacharia.
The author thanks Dan Zacharia for useful discussions and for allowing him to use corollary \ref{gorensteincorollary} in this article.

\section{The injective dimension of the Jacobson radical}
We assume that all algebras are connected non-semisimple Artin algebras and all modules are finitely generated right modules if nothing is stated otherwise.
We assume that the reader is familiar with the basics of Artin algebras as explained for example in the book \cite{ARS} or in \cite{AnFul}. We denote by $pd(M)$ the projective dimension of a module $M$ and by $id(M)$ the injective dimension of $M$.
We denote by $A$ an algebra and by $J$ its Jacobson radical. Since the equation $gldim(A)=id(J)$ is invariant under Morita equivalence, we can assume that the algebra $A$ is basic, that is $A/J$ is isomorphic to the direct product of division rings.
Recall that the \emph{finitistic projective dimension} of an algebra $A$ is defined as the supremum of all projective dimensions of modules with finite projective dimension. 
Dually, the \emph{finitistic injective dimension} of an algebra $A$ is defined as the supremum of all injective dimensions of modules with finite injective dimension. It is a famous open problem whether the finitistic dimension is always finite for Artin algebras. Recall that an algebra is called \emph{QF-3 algebra} in case the injective envelope of the regular module is projective. Famous example of QF-3 algebras are Nakayama algebras (or sometimes called serial algebras in the literature), which are by definition algebras such that every indecomposable module is uniserial. That Nakayama algebras are QF-3 algebras can be found for example in \cite{AnFul} theorem 32.2.

\begin{theorem}
Let $A$ be an Artin algebra. 
\begin{enumerate}
\item The global dimension of $A$ equals the supremum of the injective dimensions of modules
\item The global dimension of $A$ equals the maximum of the projective dimension of the simple $A$-modules.
\item The global dimension of $A$ equals the maximum of the injective dimension of the simple $A$-modules.
\end{enumerate}
\end{theorem}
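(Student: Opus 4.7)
The plan is to deduce all three statements formally from Auslander's theorem (already recalled in the introduction), the standard characterisation of projective and injective dimensions in terms of $\Ext$, and the Artin-algebra duality $D: \mod A \to \mod A^{\mathrm{op}}$. Statement (2) is nothing other than Auslander's theorem, proved in \cite{A} with a modern exposition in \cite{L}, Theorem 5.72, so no further work is required there.

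For (1), I would invoke the general identities, valid over any ring,
\[
 pd(M) = \sup\{\, n : \Ext^n(M,N) \neq 0 \text{ for some } N\,\}
\]
and
\[
 id(N) = \sup\{\, n : \Ext^n(M,N) \neq 0 \text{ for some } M\,\}.
\]
Taking the supremum over all finitely generated modules on each side gives
\[
 \sup_M pd(M) \;=\; \sup_{M,N}\{\,n : \Ext^n(M,N)\neq 0\,\} \;=\; \sup_N id(N),
\]
which is precisely statement (1). The only input needed is that finitely generated modules over an Artin algebra admit finitely generated projective and injective resolutions, which is standard.

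For (3), I would apply (2) to the opposite algebra to obtain $gldim(A^{\mathrm{op}}) = \max_T pd_{A^{\mathrm{op}}}(T)$ as $T$ runs over simple $A^{\mathrm{op}}$-modules. Under the Artin-algebra duality $D = \Hom_R(-,E)$, with $R$ the center of $A$ and $E$ an injective envelope of $R/\mathrm{rad}(R)$, the simple $A^{\mathrm{op}}$-modules are precisely the $DS$ for $S$ simple over $A$, and $D$ carries projective resolutions to injective resolutions, giving $pd_{A^{\mathrm{op}}}(DS) = id_A(S)$. Hence $gldim(A^{\mathrm{op}}) = \max_S id_A(S)$, and combining this with $gldim(A) = gldim(A^{\mathrm{op}})$ (which itself follows from (1) together with the duality, since $\sup_M pd_A(M) = \sup_M id_{A^{\mathrm{op}}}(DM) = \sup_N id_{A^{\mathrm{op}}}(N) = \sup_N pd_{A^{\mathrm{op}}}(N)$) yields (3). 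There is no real obstacle here; the theorem is a short, purely formal consequence of Auslander's result and the Artin-algebra duality, the only point requiring minor care being that simple modules correspond to simple modules under $D$.
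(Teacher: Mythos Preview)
Your proposal is correct and follows essentially the same route as the paper: part (2) is Auslander's theorem by citation, and part (3) is obtained by applying (2) to $A^{\mathrm{op}}$ and transporting via the duality $D$, using $gldim(A)=gldim(A^{\mathrm{op}})$. The only difference is cosmetic: for (1) the paper simply cites \cite{L}, Corollary 5.71, whereas you spell out the standard $\Ext$-symmetry argument that underlies that citation; and for (3) you justify $gldim(A)=gldim(A^{\mathrm{op}})$ explicitly via (1) and duality, which the paper leaves implicit.
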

\begin{proof}
\begin{enumerate}
\item This is well known, see for example \cite{L}, corollary 5.71.
\item This is the classical result of Auslander mentioned in the introduction, see for example \cite{L} theorem 5.72. for a proof.
\item Let $B=A^{op}$ be the opposite algebra of $A$. Then by (1) the global dimension of $B$ equals the maximum of projective dimensions of simple modules of $B$. Let $S$ be a simple $B$-module with projective dimension equal to the global dimension of $B$.
Applying the duality and noting that $A$ and $B$ have the same global dimension, $D(S)$ is a simple $A$-module with injective dimension equal to the global dimension of $A$.
\end{enumerate}
\end{proof}
\begin{lemma} \label{injdimlemma}
Let $0 \rightarrow X \rightarrow Y \rightarrow Z \rightarrow 0$ be a short exact sequence. Then
\begin{enumerate}
\item $pd(X) \leq max(pd(Y),pd(Z)-1)$. 
\item $id(Z) \leq max( id(Y), id(X)-1)$.
\end{enumerate}
\end{lemma}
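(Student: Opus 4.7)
The plan is to derive both inequalities from the long exact sequence of $\Ext$ applied to the given short exact sequence $0 \to X \to Y \to Z \to 0$, testing against an arbitrary module on the appropriate side.

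For part (1), set $m = \max(pd(Y), pd(Z)-1)$ and let $M$ be an arbitrary module. Applying $\Hom(-,M)$ to the short exact sequence yields a long exact sequence whose relevant segment is
\[
\Ext^n(Y,M) \longrightarrow \Ext^n(X,M) \longrightarrow \Ext^{n+1}(Z,M).
\]
For any $n > m$ we have $n > pd(Y)$, so the left term vanishes, and $n+1 > pd(Z)$, so the right term vanishes; hence $\Ext^n(X,M)=0$. Since $M$ was arbitrary, this gives $pd(X) \leq m$, which is exactly the claimed bound.

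For part (2) the argument is completely dual: set $m = \max(id(Y), id(X)-1)$, let $N$ be an arbitrary module, and apply $\Hom(N,-)$ to the short exact sequence to obtain
\[
\Ext^n(N,Y) \longrightarrow \Ext^n(N,Z) \longrightarrow \Ext^{n+1}(N,X).
\]
For any $n > m$ we have $n > id(Y)$, killing the left term, and $n+1 > id(X)$, killing the right term; so $\Ext^n(N,Z)=0$, which yields $id(Z) \leq m$.

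There is no real obstacle here; both statements are immediate from the exactness of the $\Ext$ sequence together with the standard characterization of projective/injective dimension in terms of vanishing of $\Ext$. The only thing to be careful about is the bookkeeping of indices, in particular that the $\max$ involves a shift by one coming from the position of the connecting homomorphism in the long exact sequence.
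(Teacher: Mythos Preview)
Your argument is correct: the long exact sequence in $\Ext$ together with the vanishing characterization of $pd$ and $id$ gives both inequalities immediately, and your index bookkeeping is accurate. The paper itself does not argue the point at all---it simply cites \cite{ASS}, Appendix A.4, Proposition 4.7(b) for (1) and observes that (2) is dual---so your write-up actually supplies the (standard) proof that the paper defers to the literature.
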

\begin{proof}
For (1), see \cite{ASS} A.4. proposition 4.7. (b) in the appendix of the book.
(2) is dual to (1).

\end{proof}
\begin{proposition} \label{firstpropo}
Let $A$ be an Artin algebra. Then the global dimension of $A$ equals the injective dimension of $J$ in the following cases:
\begin{enumerate}
\item $A$ is an algebra with finitistic injective dimension equal to zero.
\item $A$ is a Nakayama algebra.
\end{enumerate}
\end{proposition}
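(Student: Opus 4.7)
For part (1), I would use the hypothesis twice. Were $gldim(A)<\infty$, every module would have finite, and hence (by hypothesis) zero, injective dimension, making $A$ semisimple and contradicting our standing assumption. So $gldim(A)=\infty$. To get $id(J)=\infty$, suppose for contradiction $id(J)<\infty$; then $J$ is injective by hypothesis, so the canonical sequence $0\to J\to A\to A/J\to 0$ splits as right $A$-modules. Taking tops of $A\cong J\oplus A/J$ gives $A/J\cong J/J^2\oplus A/J$, so $J=J^2$, and Nakayama's lemma then forces $J=0$, contradicting non-semisimplicity. Thus $id(J)=\infty=gldim(A)$.

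For part (2), decompose $J=\bigoplus_i J_i$ with $J_i=\rad(P_i)$; each $J_i$ is uniserial because $A$ is Nakayama, so $\top(J_i)$ is either zero or a single simple module, and $id(J)=\max_i id(J_i)\leq gldim(A)$ is automatic. For the reverse inequality, pick a simple $S$ attaining $id(S)=gldim(A)$ by Theorem 1.1(3). Since $A$ is non-semisimple, $gldim(A)\geq 1$, so $S$ has an incoming arrow in the Gabriel quiver of $A$; the Nakayama uniserial structure then forces $S=\top(J_k)$ for some $k$ with $J_k\neq 0$. The short exact sequence $0\to\rad(J_k)\to J_k\to S\to 0$ together with Lemma \ref{injdimlemma}(2) yields
\[
id(S)\;\leq\;\max\bigl(id(J_k),\,id(\rad J_k)-1\bigr).
\]
When $gldim(A)<\infty$, the automatic bound $id(\rad J_k)\leq gldim(A)=id(S)$ gives $id(\rad J_k)-1<id(S)$, so the maximum must be attained by $id(J_k)$, forcing $id(J_k)\geq id(S)=gldim(A)$, and equality follows.

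The main obstacle is the infinite-global-dimension case of (2), since the strict inequality $id(\rad J_k)-1<id(S)$ used above collapses when $id(S)=\infty$. I would dispose of this by splitting further: if $A$ is selfinjective -- in particular any cyclic Nakayama algebra with constant Kupisch series -- then every module of finite injective dimension is injective, so the finitistic injective dimension is zero and part (1) applies. The remaining non-selfinjective cyclic Nakayama algebras of infinite global dimension I would treat by exploiting that a Nakayama algebra has only finitely many isomorphism classes of indecomposables and that the cosyzygy of a uniserial is again uniserial (or zero); consequently the cosyzygy chain of every indecomposable is eventually periodic, and tracking the periodicity of the chain of $S$ through the unique uniserial $J_k$ whose top is $S$ forces $id(J_k)=\infty$, completing the argument.
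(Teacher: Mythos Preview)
Your treatment of part~(1) matches the paper's: both deduce $gldim(A)=\infty$ from the hypothesis, force $J$ to be injective if $id(J)<\infty$, split $0\to J\to A\to A/J\to 0$, and derive a contradiction. The paper phrases the contradiction as ``every simple is a summand of $A$, hence projective, so $gldim(A)=0$''; your route via $J/J^2=0$ and Nakayama's lemma is an equivalent rephrasing.

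For part~(2) your approach is genuinely different from the paper's, and the non-selfinjective infinite-global-dimension case is a real gap. Such algebras do exist (e.g.\ the cyclic Nakayama algebra on two vertices with Kupisch series $(3,4)$ has $gldim=\infty$ but is not selfinjective), so the case cannot be ignored. Your periodicity sketch is not a proof: eventual periodicity of the cosyzygy chain of $S$ says nothing direct about $id(J_k)$, because $J_k$ and $S$ have different socles and hence different injective envelopes, so their cosyzygy sequences are not related in any obvious way. If one supposes $id(J_k)<\infty$, the long exact sequence attached to $0\to\rad J_k\to J_k\to S\to 0$ only yields $id(\rad J_k)=\infty$, and $\rad J_k=e_kJ^2$ is in general not the radical of any indecomposable projective, so nothing iterates.

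The paper sidesteps this entirely by working with projectives rather than simples. After reducing the selfinjective case to part~(1), it invokes \cite{ARS}, VI.5, Lemma~5.5 to obtain an indecomposable projective $P$ with $id(P)=g$; this holds uniformly for finite and infinite $g$. Since $P$ is then non-injective and Nakayama algebras are QF-3, the injective envelope $I(P)=e_iA$ is projective, so $P=e_iJ^{k}$ for some $k\geq 1$, and all $e_iJ^{\ell}$ with $0\leq\ell\leq k$ are projective. Hence $P=\rad(e_iJ^{k-1})$ is already a direct summand of $J$, giving $id(J)\geq id(P)=g$ without any case distinction on $g$. Your finite-$g$ argument is correct and pleasant---it anticipates the mechanism of Theorem~\ref{maintheorem}---but the paper's structural argument via the QF-3 property is what makes the Nakayama case go through cleanly in all regimes.
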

\begin{proof}
\begin{enumerate}
\item Recall that we assume that our algebras are non-semisimple. Assume that $A$ has finitistic injective dimension equal to zero. Then $A$ has infinite global dimension, since all non-injective modules must have infinite injective dimension by assumption. Assume $J$ does not have infinite injective dimension and therefore must be injective. Then the short exact sequence
$$0 \rightarrow J \rightarrow A \rightarrow A/J \rightarrow 0$$ 
splits and thus every simple module is a direct summand of $A$ and is therefore projective. This would mean that the global dimension of $A$ is zero as the global dimension equals the maximum of the projective dimensions of the simple modules. This is a contradiction and thus the injective dimension of $J$ must be infinite.
\item By (1), we can assume that $A$ is not selfinjective, as selfinjective algebras have finitistic injective dimension equal to zero. \newline
Let $A$ be a (nonselfinjective) Nakayama algebra with global dimension $g>0$. Let $P$ be an indecomposable projective module with injective dimension g (such a module exists by \cite{ARS} VI. 5. lemma 5.5). Then $P$ is not injective and thus there is an embedding $P \rightarrow I$ where $I$ is the injective envelope of $P$. But $I$ is projective, since Nakayama algebras are QF-3 algebras . Let $I=e_i A$. Then we can write $P=e_i J^k$ for some $k$, since $e_iA$ is a uniserial module. But with $e_i J^k$, all the modules $e_i J^l $ for $l=0,1,2,...k$ are also projective (this follows from the dual of theorem 32.6. of \cite{AnFul}). Thus we can write P as the radical of the projective module $e_i J^{k-1}$. Therefore, there exists a direct summand of the Jacobson radical with injective dimension $g$ and thus the injective dimension of the Jacobson radical is itself equal to $g$.
\end{enumerate}
\end{proof}

\begin{corollary}
Let $A$ be an Artin algebra that is local or selfinjective. Then $gldim(A)=id(J)$.
\end{corollary}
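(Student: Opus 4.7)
The plan is to show, in both cases, that the finitistic injective dimension of $A$ is zero; Proposition \ref{firstpropo}(1) then gives $gldim(A) = id(J)$, with both equal to $\infty$ under our standing non-semisimple assumption.

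In the selfinjective case this is classical: projective and injective modules coincide, so finite injective dimension is equivalent to finite projective dimension. If $M$ were non-projective with finite projective dimension, the last syzygy in its minimal projective resolution would be simultaneously projective (hence injective, by selfinjectivity) and contained in the radical of a projective, contradicting the fact that an injective direct summand would have nonzero top. So finite projective dimension forces projectivity, and dually finite injective dimension forces injectivity; the finitistic injective dimension is therefore zero.

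For the local case the strategy is to first prove the projective analogue: every non-projective module over a local non-semisimple Artin algebra has infinite projective dimension. Let $M$ be non-projective with minimal projective resolution $\cdots \to P_1 \to P_0 \to M \to 0$. Since $A$ is local, every indecomposable projective equals $A$, so each $P_i \cong A^{r_i}$, and by minimality each differential $P_i \to P_{i-1}$ is represented by a matrix with entries in $J$. Such a map always has a nontrivial kernel: the submodule of $A^{r_i}$ consisting of vectors whose components lie in the left annihilator of $J$ in $A$. This submodule is a right ideal (since the left annihilator of $J$ is), and it is nonzero because $A$ is a nonzero Artinian ring with $J$ nilpotent (any element of the last nonzero power $J^{N-1}$ is left-annihilated by $J$). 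Hence no differential in a minimal resolution is injective, the resolution cannot terminate, and $pd(M) = \infty$. The same statement holds over $A^{op}$ (which is again local non-semisimple), so applying the $k$-duality $D = \Hom_k(-,k)$, which interchanges projective and injective dimensions, converts this into the statement that every non-injective $A$-module has infinite injective dimension, i.e., the finitistic injective dimension of $A$ is zero.

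The main technical step is verifying that a matrix with entries in $J$ annihilates a nontrivial submodule of $A^{r_i}$; once this is in hand, the rest of the argument is a formal reduction to Proposition \ref{firstpropo}(1) together with the $D$-duality in the local case.
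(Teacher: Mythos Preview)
Your overall strategy coincides with the paper's: both reduce to Proposition~\ref{firstpropo}(1) by showing that a local or selfinjective non-semisimple Artin algebra has finitistic injective dimension zero. The paper simply asserts this (``an indecomposable module is either injective or has infinite injective dimension''), whereas you supply explicit arguments for the two cases; so your write-up is more detailed but not a different route.

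One terminological slip to fix in the local case: for right modules, a map $A^{r_i}\to A^{r_{i-1}}$ is left multiplication by a matrix, so the vectors guaranteed to lie in the kernel are those with components in the \emph{right} annihilator $r(J)=\{a\in A: Ja=0\}$ (equivalently, the right socle), not the left annihilator $\ell(J)=\{a: aJ=0\}$. Your own justification (``any element of $J^{N-1}$ is left-annihilated by $J$'') is exactly the statement $J\cdot J^{N-1}=0$, i.e.\ $J^{N-1}\subseteq r(J)$, so the mathematics you intend is correct; just replace ``left annihilator of $J$'' by ``right annihilator of $J$'' (or ``socle'') to make the argument literally valid.
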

\begin{proof}
This follows immediately from \ref{firstpropo}, since every local or selfinjective algebra has the property that an indecomposable module is either injective or has infinite injective dimension and thus the finitistic injective dimension for such algebras is zero.

\end{proof}

Before the next proposition, we remind the reader on the notion of \emph{quiver of an algebra}, see for example \cite{SY2} for this notion and chapter 11.1. of \cite{HGK} for algebras that are more general than Artin algebras. Let $A$ be a basic Artin algebra with a decomposition of $1_A$ into a sum of pairwise orthogonal primitive idempotents $e_i$:
$$1_A = \bigoplus\limits_{k=1}^{n}{e_i}.$$
The quiver of $A$, denoted by $Q(A)$, is defined as the graph with point $1,...,n$ ($i$ corresponds to the primitive idempotent $e_i$) and an arrow from $i$ to $j$ if and only if $e_i J e_j /e_i J^2 e_j \neq 0$. One can also give each arrow a weight and make $Q(A)$ into a valued quiver, see \cite{SY2} chapter VII., but we do not need this here. Note that $Q(A)$ is a connected graph if and only if $A$ is a connected algebra, see for example \cite{HGK} theorem 11.1.9. Since we assume that all our algebras are connected, also the quivers of our algebras will be connected.
Corresponding to the primitive idempotents $e_i$ (and the point $i$ in the quiver of $A$) there are the pairwise non-isomorphic indecomposable projective modules $e_iA$ with their tops $S_i$, which are the pairwise non-isomorphic simple $A$-modules.
We need the following lemma:
\begin{lemma} \label{simpleinjlemma}
Let $A$ be an Artin algebra and $S$ a simple $A$-module. Then $S$ is injective iff there is no arrow in $Q(A)$ that ends at $i$.
\end{lemma}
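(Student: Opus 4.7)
My plan is to characterize injectivity of $S_i$ by $\Ext^{1}$-vanishing against the simples and then to read off these $\Ext$-groups directly from the definition of $Q(A)$. Since $S_i$ is injective iff $\Ext^{1}_{A}(M,S_i)=0$ for every finitely generated $M$, and since the long $\Ext$-sequence together with induction on composition length reduces such vanishing to the case of simple $M$, it suffices to show: $\Ext^{1}_{A}(S_j,S_i)\neq 0$ iff there is an arrow from $j$ to $i$ in $Q(A)$.

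To compute $\Ext^{1}_{A}(S_j,S_i)$ I would apply $\Hom_{A}(-,S_i)$ to the short exact sequence
\[
0 \to e_j J \to e_j A \to S_j \to 0.
\]
Since $e_j A$ is projective, the long exact sequence yields
\[
\Hom_{A}(e_j A,S_i) \xrightarrow{\rho} \Hom_{A}(e_j J,S_i) \to \Ext^{1}_{A}(S_j,S_i) \to 0.
\]
Every homomorphism from $e_j A$ to the simple module $S_i$ factors through the top $S_j = e_j A/e_j J$ and hence vanishes on $e_j J$, so $\rho = 0$ and $\Ext^{1}_{A}(S_j,S_i) \cong \Hom_{A}(e_j J,S_i)$. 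Because $S_i$ is semisimple, any homomorphism out of $e_j J$ kills $e_j J^{2}$, so this last Hom further reduces to $\Hom_{A}(e_j J/e_j J^{2},S_i)$.

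Finally, $e_j J/e_j J^{2}$ is semisimple, and it admits a nonzero map to $S_i$ iff $S_i$ occurs as one of its direct summands, iff $(e_j J/e_j J^{2})e_i = e_j J e_i/e_j J^{2}e_i \neq 0$; by the definition of $Q(A)$ this is precisely the existence of an arrow $j \to i$ (the case $j=i$, corresponding to a loop at $i$, is handled identically). Quantifying over $j$ gives the desired equivalence. The step requiring most care is the identification $\Hom_{A}(M,S_i) \neq 0 \Leftrightarrow Me_i \neq 0$ for semisimple $M$, where one must remember that the simple factors are modules over the division rings $F_{k} = \End_{A}(S_k)^{\mathrm{op}}$ rather than over a field; nevertheless this equivalence is part of the standard structure theory of semisimple modules over basic Artin algebras and causes no real trouble.
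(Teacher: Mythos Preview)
Your argument is correct and follows essentially the same strategy as the paper: both reduce injectivity of $S_i$ to the vanishing of $\Ext^{1}_{A}(S_j,S_i)$ for all simples $S_j$, and then identify nonvanishing of $\Ext^{1}_{A}(S_j,S_i)$ with the existence of an arrow $j\to i$ in $Q(A)$. The only difference is that the paper dispatches the second step by citing a reference, whereas you compute $\Ext^{1}_{A}(S_j,S_i)\cong \Hom_{A}(e_jJ/e_jJ^{2},S_i)$ directly from the projective presentation $0\to e_jJ\to e_jA\to S_j\to 0$; this makes your proof self-contained but is not a different idea.
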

\begin{proof}
Let $S=S_i$.
There is an arrow from $j$ to $i$ in $Q(A)$ iff $Ext_A^1(S_j,S_i) \neq 0$ by theorem 1.9. of \cite{SY2}. Now a module $M$ is injective iff $Ext_A^1(S_j,M)=0$ for all simple modules $j=1,...,n$. Thus $S_i$ is injective iff $Ext_A^1(S_j,S_i) =0$ for all $j$ iff there is no arrow from a point $j$ to $i$ in $Q(A)$.
\end{proof}

\begin{proposition} \label{mainpropo}
Let $A$ be an Artin algebra, then the injective dimension of $J/J^2$ equals the global dimension of $A$.

\end{proposition}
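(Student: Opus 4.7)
The plan is to identify the simple summands of the semisimple module $J/J^2$ with the non-injective simple $A$-modules, and then apply the formula $gldim(A) = \max_j id(S_j)$ from Theorem 1.1(3).

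First I would observe that $J/J^2$ is annihilated by $J$, hence is a semisimple right $A$-module, and so decomposes as $\bigoplus_{j=1}^n S_j^{m_j}$ for certain multiplicities $m_j \geq 0$. Using the natural isomorphism $\Hom_A(e_j A, J/J^2) \cong (J/J^2)e_j = \bigoplus_i e_i J e_j / e_i J^2 e_j$, the multiplicity $m_j$ is strictly positive if and only if $e_i J e_j / e_i J^2 e_j \neq 0$ for some $i$, which, by the very definition of $Q(A)$, means that some arrow in $Q(A)$ ends at vertex $j$. By Lemma \ref{simpleinjlemma} this last condition is in turn equivalent to $S_j$ not being injective. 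Hence the simple summands of $J/J^2$ are precisely the non-injective simple $A$-modules (each appearing with positive multiplicity), and consequently
$$id(J/J^2) \;=\; \max\{id(S_j) : S_j \text{ is not injective}\}.$$

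To finish, I would compare this maximum with $gldim(A) = \max_j id(S_j)$. Since $A$ is connected and non-semisimple, $J \neq 0$; as $J$ is nilpotent in an Artin algebra this forces $J/J^2 \neq 0$, so at least one simple is non-injective and in particular $gldim(A) \geq 1$. An injective simple module has injective dimension zero, so any $S_j$ realising the maximum $\max_j id(S_j) \geq 1$ is automatically non-injective, and therefore appears as a summand of $J/J^2$. This gives $id(J/J^2) \geq gldim(A)$, while the reverse inequality $id(J/J^2) \leq \max_j id(S_j) = gldim(A)$ is immediate from the direct sum decomposition.

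There is no substantive obstacle; the only step requiring care is the bookkeeping in the first paragraph that translates the nonvanishing of $e_i J e_j / e_i J^2 e_j$ into the occurrence of $S_j$ as a composition factor of $J/J^2$, and this is essentially the definition of $Q(A)$ combined with the semisimplicity of $J/J^2$.
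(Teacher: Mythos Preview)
Your argument is correct and follows essentially the same route as the paper: both proofs observe that $J/J^2$ is semisimple, use Lemma~\ref{simpleinjlemma} together with the definition of $Q(A)$ to see that every non-injective simple occurs as a summand of $J/J^2$, and then invoke the formula $gldim(A)=\max_j id(S_j)$. Your write-up is slightly more explicit (you record both inequalities and the fact that $J/J^2\neq 0$), but there is no substantive difference.
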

\begin{proof}
Note that $J/J^2$ is a direct sum of simple modules, since this module is semisimple. Let $S$ be a simple module that is not injective corresponding to the point $i$ in the quiver $Q(A)$ of $A$. Then there is an arrow starting at a point $j$ to $i$ in $Q(A)$ or else $S$ would be injective by \ref{simpleinjlemma}.
Now $S$ is a direct summand of $J/J^2$ since $e_j J/J^2 e_i \neq 0$ and $J/J^2$ is semisimple. But the global dimension of an algebra equals the supremum of the injective dimensions of simple modules. Thus there is a simple non-injective module with injective dimension equal to the global dimension that is a summand of $J/J^2$, showing that $J/J^2$ has injective dimension equal to the global dimension.

\end{proof}

\begin{theorem} \label{maintheorem}
Let $A$ be an Artin algebra of finite global dimension $g$. Then the injective dimension of the Jacobson radical of $A$ equals the global dimension of $A$.

\end{theorem}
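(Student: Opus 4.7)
The plan is to combine Proposition \ref{mainpropo} with the short exact sequence
$$0 \to J^2 \to J \to J/J^2 \to 0$$
via a long exact sequence of $\Ext$.

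First, note that since $\gldim(A) = g$, every module has injective dimension at most $g$; in particular $id(J) \leq g$. It therefore remains to establish the lower bound $id(J) \geq g$. Proposition \ref{mainpropo} furnishes this for $J/J^2$: there exists a module $M$ with $\Ext^{g}(M, J/J^{2}) \neq 0$.

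Next, I would apply $\Hom(M, -)$ to the short exact sequence displayed above to obtain the long exact sequence
$$\cdots \to \Ext^{g}(M, J) \to \Ext^{g}(M, J/J^{2}) \to \Ext^{g+1}(M, J^{2}) \to \cdots$$
Because $\gldim(A) = g$, the group $\Ext^{g+1}(M, J^{2})$ vanishes, so the connecting arrow $\Ext^{g}(M, J) \to \Ext^{g}(M, J/J^{2})$ is surjective. Since the target is nonzero by the previous paragraph, the source is nonzero as well, which shows $id(J) \geq g$.

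There is no real obstacle here: the argument is essentially a one-line deduction from Proposition \ref{mainpropo} once one observes that finite global dimension makes the obstruction term $\Ext^{g+1}(M, J^{2})$ vanish. The only point worth flagging is that this argument genuinely uses the finiteness assumption on $\gldim(A)$ (so that the vanishing of $\Ext^{g+1}$ is available); for infinite global dimension, a different technique would be required, which is why the conjecture remains open in that generality.
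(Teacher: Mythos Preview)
Your proof is correct and follows essentially the same approach as the paper: both use the short exact sequence $0 \to J^2 \to J \to J/J^2 \to 0$ together with Proposition~\ref{mainpropo} and the vanishing of $\Ext^{g+1}$ coming from finite global dimension. The only cosmetic difference is that the paper packages your long exact sequence argument into the inequality $id(J/J^2) \leq \max(id(J), id(J^2)-1)$ of Lemma~\ref{injdimlemma}, whereas you spell it out directly with an explicit witness $M$.
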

\begin{proof}
Note that we assume that our algebras are not semisimple and thus $g>0$.
We have the short exact sequence 
$$0 \rightarrow J^2 \rightarrow J \rightarrow J/J^2 \rightarrow 0,$$
which gives that $id(J/J^2) \leq max(id(J),id(J^2)-1)$ by \ref{injdimlemma}.
By \ref{mainpropo} we have $id(J/J^2)=g$, and thus the inequality gives
$g \leq max(id(J),id(J^2)-1)$.
Now since we assume that $A$ has finite global dimension, $id(J^2)-1 \leq g-1$ and the inequality $g \leq max(id(J),id(J^2)-1)$ can only hold if $id(J)=g$, which proves the theorem.
\end{proof}

We get two corollaries from the previous theorem. Recall that an algebra $A$ is called \emph{Gorenstein} in case the injective dimension of the left and right regular modules are finite and coincide. In this case the \emph{Gorenstein dimension} of an algebra is by definition the injective dimension of the regular module $A$. The class of Gorenstein algebras contains all algebras of finite global dimension and for algebras of finite global dimension the Gorenstein dimension coincides with the global dimension. Using \ref{maintheorem} we can prove that our conjecture on the injective dimension of the Jacobson radical also holds for Gorenstein algebras. The next corollary and its proof are due to Dan Zacharia.
\begin{corollary} \label{gorensteincorollary}
Let $A$ be a Gorenstein algebra. Then the injective dimension of the Jacobson radical coincides with the Gorenstein dimension of $A$.
\end{corollary}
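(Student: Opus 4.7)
The strategy is a short case analysis on $gldim(A)$ coupled with one external Gorenstein input, reducing the corollary to Theorem \ref{maintheorem}.

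In the case $gldim(A) < \infty$, the paragraph preceding the corollary records that the Gorenstein dimension $g = id(A)$ agrees with $gldim(A)$, so Theorem \ref{maintheorem} yields $id(J) = gldim(A) = g$ at once. This is where the corollary delivers finite content.

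For the complementary case $gldim(A) = \infty$, Auslander's formula $gldim(A) = pd(J) + 1$ from the opening theorem of the section forces $pd(J) = \infty$. I would then invoke the Iwanaga--Gorenstein dichotomy: over a Gorenstein algebra every finitely generated module satisfies $pd(M) < \infty$ if and only if $id(M) < \infty$, with both dimensions bounded by $g$ in the finite case. Applied to $M = J$ this forces $id(J) = \infty$ as well, so the extended-valued invariants $id(J)$ and $gldim(A)$ coincide in both regimes, with the Gorenstein dimension $g$ appearing as the common finite value precisely when $gldim(A)$ is finite. In other words, the Gorenstein hypothesis is used exactly to rule out the pathological behaviour that $J$ could have finite injective dimension while $A$ has infinite global dimension.

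The only ingredient outside the excerpt is the Iwanaga--Gorenstein dichotomy, and this is what I view as the main obstacle. I would cite it from standard Gorenstein homological algebra references (for instance Enochs--Jenda, or Happel's article on Gorenstein algebras) rather than reprove it; a self-contained sketch would splice a truncated projective resolution of $M$ against the finite injective coresolution of $A$ and run dimension shifting through $\Hom(-,A)$. Once this dichotomy is in hand the rest is a one-line appeal to Theorem \ref{maintheorem}.
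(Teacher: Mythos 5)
Your proposal is correct, and the finite global dimension case is identical to the paper's: a direct appeal to Theorem \ref{maintheorem} via the remark that Gorenstein dimension equals global dimension there. For the infinite case, however, you take a genuinely different route. The paper argues elementarily with its own tools: in the short exact sequence $0 \rightarrow J \rightarrow A \rightarrow A/J \rightarrow 0$ the module $A/J$ contains every simple module as a direct summand, hence has infinite injective dimension because the global dimension is the maximum of the injective dimensions of the simples; since $id(A) < \infty$ by the Gorenstein hypothesis, Lemma \ref{injdimlemma}(2) gives $id(A/J) \leq \max(id(A), id(J)-1)$, so $id(J)$ cannot be finite. You instead pass through Auslander's formula to get $pd(J) = \infty$ and then invoke Iwanaga's theorem that over a Gorenstein algebra a finitely generated module has finite projective dimension if and only if it has finite injective dimension. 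That argument is valid and the dichotomy is standard (Iwanaga; Enochs--Jenda; Happel), but it imports a substantially heavier external result, and the direction you actually need --- $id(J) < \infty$ implies $pd(J) < \infty$ --- is the harder half, which genuinely uses the two-sided Gorenstein condition; the paper's argument uses only the finiteness of $id(A)$ as a right module together with Lemma \ref{injdimlemma}. So the paper's route is more self-contained and formally a little more general, while yours places the corollary in the broader context of the Gorenstein pd/id dichotomy at the cost of an unproved citation. (One further remark: both your proof and the paper's in fact establish $id(J) = gldim(A)$, which is infinite in the second case, so the corollary's literal phrasing in terms of the necessarily finite Gorenstein dimension only matches the finite global dimension case; this is an issue with the statement, not with your argument.)
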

\begin{proof}
In case $A$ has finite global dimension, the result follows from \ref{maintheorem}. Thus assume $A$ is Gorenstein with infinite global dimension. We have to prove that the Jacobson radical $J$ has infinite injective dimension.
Look at the following short exact sequence, where the injective map is given by the inclusion of the radical into the regular module:
$$0 \rightarrow J \rightarrow A \rightarrow A/J \rightarrow 0.$$
The module $A/J$ is a direct sum of simple modules and contains each simple $A$-module at least once as a direct summand. Since the global dimension equals the maximum of injective dimension of simple modules, $A/J$ has infinite injective dimension. By assumption, $A$ has finite injective dimension. Assume now that $J$ also has finite injective dimension.
Then looking at the above short exact sequence and using \ref{injdimlemma}, we get that $id(A/J) \leq max(id(A),id(J)-1) < \infty$, which is a contradiction. Thus $J$ must have infinite injective dimension.

\end{proof}
We remark that the above proof of the theorem \ref{maintheorem} also shows that the equality $gldim(A)=id(J)$ holds for algebras $A$ where one has $id(J) \geq id(J^2)-1$. We note that as a corollary:
\begin{corollary}
Let $A$ be an Artin algebra with Jacobson radical $J$ such that $id(J) \geq id(J^2)-1$. Then the global dimension of $A$ equals the injective dimension of $J$.
This is especially true for radical square zeros algebras, namely such algebas where $J^2=0$.
\end{corollary}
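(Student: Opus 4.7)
The plan is to recycle the short exact sequence argument from the proof of Theorem \ref{maintheorem}, but replace the finite global dimension hypothesis with the standing inequality $id(J) \geq id(J^2)-1$. First I would write down the canonical short exact sequence
$$0 \rightarrow J^2 \rightarrow J \rightarrow J/J^2 \rightarrow 0$$
and apply Lemma \ref{injdimlemma}(2) to obtain the bound $id(J/J^2) \leq \max(id(J), id(J^2)-1)$. Proposition \ref{mainpropo} identifies the left-hand side as $gldim(A)$, so $gldim(A) \leq \max(id(J), id(J^2)-1)$.

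Second, I would invoke the hypothesis $id(J) \geq id(J^2)-1$ directly: it forces $\max(id(J), id(J^2)-1) = id(J)$, and therefore $gldim(A) \leq id(J)$. The reverse inequality $id(J) \leq gldim(A)$ is immediate from Theorem 1.1(1), which identifies the global dimension with the supremum of injective dimensions of all modules. Combining the two inequalities gives $gldim(A) = id(J)$, as required.

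For the radical square zero case, the cleanest route is to notice that $J^2 = 0$ implies $J \cong J/J^2$ as $A$-modules, and then apply Proposition \ref{mainpropo} directly to conclude $id(J) = id(J/J^2) = gldim(A)$. Alternatively, one can adopt the convention $id(0) = -\infty$, in which case the hypothesis $id(J) \geq id(J^2)-1$ holds vacuously and the general argument applies verbatim. There is no real obstacle here; the result is essentially a repackaging of the mechanism already set up for Theorem \ref{maintheorem}, isolating the algebraic condition that lets the argument go through without the finite global dimension assumption.
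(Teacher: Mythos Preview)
Your proposal is correct and is essentially the same argument the paper intends: the paper merely remarks that the proof of Theorem \ref{maintheorem} goes through verbatim once one replaces the finite global dimension bound $id(J^2)-1 \leq g-1$ by the hypothesis $id(J) \geq id(J^2)-1$, and you have spelled this out explicitly (including the reverse inequality $id(J) \leq gldim(A)$, which the paper leaves implicit). Your treatment of the radical square zero case via $J \cong J/J^2$ and Proposition \ref{mainpropo} is also fine and in the same spirit.
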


We remark that we tested the inequality $id(J) \geq id(J^2)-1$ for various classes of algebras with the computer and in all our examples we even had that $id(J) \geq id(J^2)$.


\begin{thebibliography}{Gus}
\bibitem[AnFul]{AnFul} Anderson, F.; Fuller, K.: {\it Rings and Categories of Modules.} Graduate Texts in Mathematics, Volume 13, Springer-Verlag, 1992. 
\bibitem[A]{A} Auslander, M. : {\it On the dimension of modules and algebras. III. Global dimension.}     Nagoya Math. J. Volume 9 (1955), 67-77.
\bibitem[ARS]{ARS} Auslander, M.; Reiten, I.; Smalo, S.: {\it Representation Theory of Artin Algebras.}
Cambridge Studies in Advanced Mathematics, Volume 36, Cambridge University Press, 1997.
\bibitem[ASS]{ASS} Assem, I.; Simson, D.; Skowronski, A.: {\it Elements of the Representation Theory of Associative Algebras, Volume 1: Techniques of Representation Theory.}
 London Mathematical Society Student Texts, 2007. 
 \bibitem[Ben]{Ben} Benson, D.: {\it Representations and cohomology I: Basic representation theory of finite groups and associative algebras.} Cambridge Studies in Advanced Mathematics, Volume 30, Cambridge University Press, 1991.
\bibitem[HGK]{HGK} Hazewinkel, M.; Gubareni, N.; Kirichenko, V.V. : {\it Algebras, Rings and Modules Volume 1.} Mathematics and its Applications, Kluwer Academic Publishers, 2004.
\bibitem[L]{L} Lam, T. Y. : {\it Lectures on modules and rings.} Graduate Texts in Mathematics, Springer, 1998.
\bibitem[SY2]{SY2} Skowronski, A.; Yamagata, K. :{\it Frobenius algebras II: Tilted and Hochschild Extension Algebras.} Textbooks in Mathematics, European Mathematical Society, 2017.
\end{thebibliography}
\end{document}